\newcommand{\N}{{\mathbb N}}
\newtheorem{prob}{Problem}
\newtheorem{cor}{Corollary}
\newtheorem{thm}{Theorem}
\newtheorem{lem}{Lemma}
\date{}
\title{On random subgraphs of Kneser and Schrijver graphs}
\begin{document}
\author{Andrey Kupavskii\footnote{Moscow Institute of Physics and Technology, \'Ecole Polytechnique F\'ed\'erale de Lausanne; Email: {\tt kupavskii@yandex.ru} \ \ Research supported in part by the Swiss National Science Foundation Grants 200021-137574 and 200020-14453 and by the grant N 15-01-03530 of the Russian Foundation for Basic Research.}}

\maketitle

\begin{abstract} A Kneser graph $KG_{n,k}$ is a graph whose vertices are in one-to-one correspondence with $k$-element subsets of $[n],$ with two vertices connected if and only if the corresponding sets do not intersect. A famous result due to Lov\'asz states that the chromatic number of a Kneser graph $KG_{n,k}$ is equal to $n-2k+2$. In this paper we study the chromatic number of a random subgraph of a Kneser graph $KG_{n,k}$ as $n$ grows. A random subgraph $KG_{n,k}(p)$ is obtained by including each edge of $KG_{n,k}$ with probability $p$. For a wide range of parameters $k = k(n), p = p(n)$ we show that $\chi(KG_{n,k}(p))$ is very close to $\chi(KG_{n,k}),$  w.h.p. differing by at most 4 in many cases. Moreover, we obtain the same bounds on the chromatic numbers for the so-called Schrijver graphs, which are known to be vertex-critical induced subgraphs of Kneser graphs. \end{abstract}

\section{Introduction}
In this paper we study the well-known Kneser graph $KG_{n,k}.$ Vertices of the graph are $k$-subsets of an $n$-element set, which we denote by $[n]$. Two $k$-sets are joined by an edge if they are disjoint. These graphs were first investigated by Martin Kneser \cite{Knez}. He showed that $\chi(KG_{n,k})\le n-2k+2$ and conjectured that this bound is tight. The conjecture was proved by L\'aszl\'o Lov\'asz \cite{Lova} over 20 years later. He used tools from algebraic topology, giving birth to the field of topological combinatorics.
Later, two very nice and short proofs were given by Imre B\' ar\' any \cite{Bar} and Joshua E. Greene \cite{Gr}.

Several papers were devoted to the study of the chromatic number of Kneser graphs of set systems. For any system of $k$-sets $\mathcal A\subset {[n]\choose k}$ we can define the Kneser graph $KG(\mathcal A)$ in the following natural way. The vertices of $KG(\mathcal S)$ are the elements of $\mathcal A$, while two of them are joined if and only if they are disjoint. In particular, there were results by V. Dol'nikov \cite{Dol} and A. Schrijver \cite{Sch}. Later, some of the results were extended to Kneser hypergraphs (see \cite{AFL}).

In the paper \cite{Sch}, Schrijver noted that a slight modification of B\' ar\'any's proof allow to prove a much stronger statement: $\chi(SG_{n,k})=n-2k+2.$ Here $SG_{n,k}$ is the subgraph of $KG_{n,k}$ on the vertex set induced by all the \textit{stable }$k$-element subsets $S$ of $[n]$, that is, the ones that do not  contain two cyclically consecutive elements from $[n]$: if $i\in S,$ then $(i-1)\!\!\mod n, (i+1)\!\!\mod n$ are not in $S$. It looks really surprising, since the number of vertices in $SG_{n,k}$ is much smaller than in $KG_{n,k}$. Schrijver also proved that $SG_{n,k}$ is a vertex-critical subgraph of $KG_{n,k}$, which means that any proper induced subgraph of $SG_{n,k}$ has a strictly smaller chromatic number.

 A Kneser graph of any $k$-uniform set system is an induced subgraph of $KG_{n,k}$. We, in turn, deal with spanning subgraphs of Kneser and Schrijver graphs. Roughly speaking, we show that the chromatic number of random subgraphs of these graphs is very close to the chromatic number of the original graphs. Speaking more formally, we have to define the random graph model for the problem. We do it for Kneser graphs, with the definition for Schrijver graphs being analogous. A random graph $KG_{n,k}(p)$ has the same set of vertices as $KG_{n,k},$ and each edge from $KG_{n,k}$ is included in $KG_{n,k}(p)$ with probability $p$. Stability questions for random graphs got significant attention in the last years, with many beautiful results having been proved. We mention only the most relevant. In the paper \cite{BNR} Bollob\'as, Narayanan and Raigorodskii studied the size of maximal independent sets in $KG_{n,k}(p)$, showing a strong stability result for $k$ being not too large: $k = o(n^{1/3})$. Later on, this condition was relaxed to $k<(1/2-\epsilon)n$ by Balogh, Bollob\'as, and Narayanan \cite{BBN}.

 The case of more general graphs $K(n,k,l)$ with $0<l<k$ was studied by Bogolyubskiy, Gusev, Pyaderkin and Raigorodskii in \cite{BGPR,BGPR2}. The vertices of $K(n,k,l)$ are $k$-element subsets of $[n]$, with two vertices adjacent if the corresponding sets intersect in exactly $l$ elements. In \cite{BGPR,BGPR2} the authors obtained several results concerning the independence number and the chromatic number of random subgraphs of such graphs. For a bit broader perspective on extremal questions for random graphs we refer the reader to the survey \cite{RS}. Different questions concerning random graphs are discussed in the books \cite{AS}, \cite{Boll}.

\section{The main theorem and its corollaries}
 The main result of the paper is the following quite technical theorem. 
\begin{thm}\label{th1}  Let $k = k(n)$, $\ell=\ell(n)$ be integer functions. Assume that  $k\ge 2, \ell\ge 1$ and choose $p = p(n)$ such that $0<p\le 1$. Put $d = n-2k-2\ell+1$ and $t =\left\lceil {\ell+k\choose k}/d\right\rceil$. If $d\ge 2$ and for some $\epsilon>0$ we have $(1-\epsilon)p>t^{-2}n\ln 3 + 2t^{-1}(1 +\ln d),$ then the graph $SG_{n,k}(p)$ w.h.p. has chromatic number at least $d+1$.
\end{thm}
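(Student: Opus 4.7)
The plan is to argue by contradiction: assume that $SG_{n,k}(p)$ admits a proper $d$-coloring $c\colon V(SG_{n,k})\to[d]$ and show that this event is very unlikely. The key move is to bootstrap Schrijver's theorem, applied this time to the auxiliary graph $SG_{n,k+\ell}$, whose chromatic number is $n-2(k+\ell)+2=d+1$.

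First I would observe that if $A$ is a stable $(k+\ell)$-subset of $[n]$, then every $k$-subset of $A$ is also stable, so $\binom{A}{k}\subseteq V(SG_{n,k})$. The hypothetical coloring $c$ thus splits these $\binom{\ell+k}{k}$ subsets into at most $d$ color classes, so by pigeonhole some class contains at least $t=\lceil\binom{\ell+k}{k}/d\rceil$ of them; pick such a dominant color $\tilde{c}(A)\in[d]$. The resulting map $\tilde{c}\colon V(SG_{n,k+\ell})\to[d]$ cannot be a proper coloring of $SG_{n,k+\ell}$, since $\chi(SG_{n,k+\ell})=d+1>d$. Hence there exist disjoint stable $(k+\ell)$-subsets $A,B$ with $\tilde{c}(A)=\tilde{c}(B)$. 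Choosing sets $S\subseteq\binom{A}{k}$ and $T\subseteq\binom{B}{k}$, each of size $t$ and all of this common color, $S\cup T$ is monochromatic under $c$ and hence independent in $SG_{n,k}(p)$. Since $A\cap B=\emptyset$, every pair in $S\times T$ is a Kneser edge, so all $t^2$ corresponding edges of $SG_{n,k}$ are absent from $SG_{n,k}(p)$.

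The remaining step is a routine union bound over quadruples $(A,B,S,T)$ of this form. The number of ordered pairs of disjoint $(k+\ell)$-subsets of $[n]$ is the trinomial coefficient $\binom{n}{k+\ell}\binom{n-k-\ell}{k+\ell}\le 3^n$. Using $\binom{\ell+k}{k}\le td$, the number of choices of $(S,T)$ is at most $\binom{\binom{\ell+k}{k}}{t}^2\le(ed)^{2t}$. Each fixed quadruple succeeds (all $t^2$ prescribed edges missing) with probability $(1-p)^{t^2}\le e^{-pt^2}$ by independence of edges. So the expected number of successful quadruples is at most
$$\exp\bigl(n\ln 3+2t(1+\ln d)-pt^2\bigr),$$
which by the hypothesis is at most $\exp(-\epsilon pt^2)$. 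Since $pt^2\ge n\ln 3\to\infty$, this is $o(1)$, and Markov's inequality completes the argument.

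The main \emph{conceptual} step, rather than the calculation, is to notice that the target $d+1=n-2(k+\ell)+2$ is precisely $\chi(SG_{n,k+\ell})$, so that invoking Schrijver's theorem at the $(k+\ell)$-level converts any hypothetical $d$-coloring of $SG_{n,k}$ into a forced disjoint monochromatic configuration that the random graph is then unlikely to accommodate. The main technical care is in preserving stability throughout (so the pigeonholed $k$-subsets really lie in $V(SG_{n,k})$) and in checking that the slack in the hypothesis on $p$ forces $\epsilon pt^2\to\infty$.
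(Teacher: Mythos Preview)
Your proof is correct and takes a genuinely different, more elementary route than the paper's. The paper does not invoke Schrijver's theorem for $SG_{n,k+\ell}$ as a black box; instead it redevelops the topological argument directly: it fixes a Gale embedding $f:[n]\to S^{d-1}$ so that every open hemisphere contains the image of a stable $(k+\ell)$-set, then, given a hypothetical $d$-coloring of $SG_{n,k}$, defines an open cover $B_1,\ldots,B_d$ of $S^{d-1}$ by majority color and applies the Lusternik--Schnirelman--Borsuk theorem to produce antipodal points in some $B_i$. This yields two opposite hemispheres, each containing at least $t$ stable $k$-sets of color $i$, and the union bound then runs over combinatorially distinct great-sphere partitions of $f([n])$ (at most $3^n$ of them), with an extra monotonicity step to reduce the variable popular-color counts $t(S^+),t(S^-)$ to the common value $t$. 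Your argument bypasses all of this: by recognizing that $d+1=\chi(SG_{n,k+\ell})$, you let Schrijver's theorem do the topological work and obtain directly a disjoint pair $A,B$ of stable $(k+\ell)$-sets with the same dominant color, so your union bound runs over pairs $(A,B)$ rather than over hemisphere partitions. The resulting estimate $\exp\bigl(n\ln 3+2t(1+\ln d)-pt^2\bigr)$ is identical in both proofs. What your approach buys is simplicity and a cleaner conceptual picture; what the paper's approach buys is self-containment (it does not rely on Schrijver's theorem as an external input, only on Gale's lemma and LSB), and it makes transparent exactly which geometric mechanism is being randomized.
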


\textbf{Remark 1.} We formulate and prove all the results for $SG_{n,k}(p)$, since they yield the same bounds for $KG_{n,k}(p)$. Thus, in Theorem \ref{th1}, as well as in the forthcoming corollaries, $SG_{n,k}(p)$ may be replaced by $KG_{n,k}(p)$.\\

\textbf{Remark 2.} We use the following notations. By $\ln x$ we denote the natural logarithm of $x$. By writing $f(n)\gg g(n)$ we mean that $\lim_{n\to \infty}g(n)/f(n) = 0$, which is equivalent to writing $g(n)\ll f(n)$. A formula $f(n)\sim g(n)$ signifies that $\lim_{n\to \infty}f(n)/g(n) = 1$. We remind the reader that the abbreviation w.h.p. means ``with high probability'', that is, with probability tending to 1 as $n$ tends to infinity.\\

The condition of Theorem \ref{th1} is quite difficult to interpret, since there are too many parameters involved. Before proving the theorem we  present several corollaries of Theorem \ref{th1}, which follow directly from it by easy calculations.

\begin{cor}\label{cor1} Let $p$ be fixed, $0<p\le 1$. If $k\gg n^{3/4}$, then w.h.p. $\chi(SG_{n,k}(p))\ge \chi(KG_{n,k})-4$. Moreover, if $n-2k \ll \sqrt n,$ then w.h.p. $\chi(SG_{n,k}(p))\ge \chi(KG_{n,k})-2.$
\end{cor}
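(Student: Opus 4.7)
The plan is to apply Theorem \ref{th1} directly, with $\ell$ chosen as small as possible so that the conclusion $\chi \ge d+1$ matches the target. Since $\chi(KG_{n,k}) = n - 2k + 2$ and $d = n - 2k - 2\ell + 1$, the bound $d+1 \ge \chi(KG_{n,k}) - 4 = n - 2k - 2$ forces $\ell \le 2$, while $d+1 \ge \chi(KG_{n,k}) - 2 = n - 2k$ forces $\ell \le 1$. So I would take $\ell = 2$ for the first claim and $\ell = 1$ for the second, then verify that the hypothesis of Theorem \ref{th1} becomes vacuous in the relevant range of $k$.

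For the first claim, with $\ell = 2$ we get $d = n - 2k - 3$ and $t = \lceil \binom{k+2}{2}/d \rceil$. Since $d \le n$, we have $t \ge (k+2)(k+1)/(2n)$, and the assumption $k \gg n^{3/4}$ gives $k^2 \gg n^{3/2}$, hence $t \gg n^{1/2}$. This implies both $t^{-2} n \ln 3 = o(1)$ and $2t^{-1}(1 + \ln d) \le 2t^{-1}(1 + \ln n) = o(1)$. Because $p$ is a fixed positive constant, for any small enough $\epsilon > 0$ the inequality $(1-\epsilon)p > t^{-2}n\ln 3 + 2t^{-1}(1+\ln d)$ holds for all sufficiently large $n$. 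Theorem \ref{th1} then delivers $\chi(SG_{n,k}(p)) \ge d+1 = n - 2k - 2$ w.h.p. (Values of $n,k$ with $d < 2$ are trivial because $\chi(KG_{n,k}) - 4 \le 1$.)

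For the second claim, with $\ell = 1$ we have $d = n - 2k - 1$ and $t = \lceil (k+1)/d \rceil$. The hypothesis $n - 2k \ll \sqrt n$ gives $d \ll \sqrt n$, while $k+1 \sim n/2$, so $t \ge (k+1)/d \gg (n/2)/\sqrt n \to \infty$, and in fact $t \gg \sqrt n$. The same two estimates as before show the right-hand side of the condition in Theorem \ref{th1} tends to $0$, and again the theorem applies, yielding $\chi(SG_{n,k}(p)) \ge d+1 = n - 2k = \chi(KG_{n,k}) - 2$ w.h.p.

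Neither step involves any real obstacle; the only thing that needs a moment of thought is matching the required loss ($4$ or $2$) to the admissible value of $\ell$ and then checking that the rather awkward right-hand side of the condition in Theorem \ref{th1} is indeed $o(1)$ under the stated growth assumption on $k$. The threshold $k \gg n^{3/4}$ is exactly what makes $t \gg n^{1/2}$ work with $\ell = 2$, and the threshold $n - 2k \ll \sqrt n$ plays the analogous role for $\ell = 1$; both are tight for the method and explain the appearance of these particular exponents in the statement.
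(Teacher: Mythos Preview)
Your proof is correct and follows essentially the same approach as the paper: choose $\ell=2$ for the first claim and $\ell=1$ for the second, then use $d\le n$ (respectively $d\ll\sqrt n$) to show $t\gg n^{1/2}$, which makes the right-hand side of the condition in Theorem~\ref{th1} tend to $0$. Your write-up is slightly more detailed (explaining the choice of $\ell$ and disposing of the degenerate case $d<2$), but the argument is identical.
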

\begin{proof} We start with the first claim. It is enough to verify the condition on $p$ from Theorem \ref{th1} for sufficiently large $n$ and $\ell=2$. In these assumptions we have $t = \left\lceil {2+k\choose 2}/d\right\rceil\gg (n^{3/4})^2/n = n^{1/2}$. Thus, we have $t^{-2}n\ln 3 + 2t^{-1}(1 +\ln d) \le t^{-2}n\ln 3 + 2t^{-1}(1 +\ln n)\ll 1,$ so the condition on $p$ holds for sufficiently large $n$.

As for the second claim, it is enough to verify the condition on $p$ for sufficiently large $n$ and $\ell=1$. We have $d\ll \sqrt n$. We again obtain the same bound on $t$: $t = \left\lceil {k+1\choose 1}/d\right\rceil\gg n/\sqrt n = n^{1/2}$. The rest remains unchanged.
\end{proof}

\begin{cor}\label{cor2} Fix some function $g(n), g(n)\to\infty$ as $n\to \infty$. For any $k$, where $2\le k\le \frac{n}2 -g(n),$ and for any fixed $p$, $0<p\le 1$ w.h.p. we have $\chi(SG_{n,k}(p))\sim \chi(KG_{n,k})$.
\end{cor}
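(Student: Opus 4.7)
The upper bound $\chi(SG_{n,k}(p))\le\chi(SG_{n,k})=n-2k+2$ is immediate from Schrijver's theorem, so the task is to establish a matching lower bound. The plan is to apply Theorem~\ref{th1} with a carefully chosen $\ell=\ell(n,k)$. Concretely, fix $\eta>0$; it suffices to find $\ell$ with $2\ell\le\eta(n-2k)$ for all large $n$ satisfying the hypothesis of Theorem~\ref{th1}, since then w.h.p.\ $\chi(SG_{n,k}(p))\ge n-2k-2\ell+2\ge(1-\eta)(n-2k+2)$, and letting $\eta\to 0$ gives the claim.

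Since $p>0$ is fixed, the hypothesis of Theorem~\ref{th1} reduces to $t^{-2}n\ln 3+2t^{-1}(1+\ln d)\to 0$, which is implied by $t\gg\sqrt n$; since $d\le n$, it suffices to have $\binom{\ell+k}{k}\gg n^{3/2}$.

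I will split into two cases based on $h:=n-2k$. In Case~A, where $h\ge n^{4/5}$, set $\ell=\lceil n^{3/4}\log n\rceil$: since $k\ge 2$ and $\binom{\ell+k}{k}$ is non-decreasing in $k$, we have $\binom{\ell+k}{k}\ge\binom{\ell+2}{2}\ge\ell^2/2\gg n^{3/2}$, and $2\ell/h\le 4\log n/n^{1/20}\to 0$. In Case~B, where $h<n^{4/5}$, one has $k>n/2-n^{4/5}/2\ge n/3$ for large $n$, so choosing $\ell=2$ yields $\binom{k+2}{2}\ge n^2/18\gg n^{3/2}$ and $2\ell/h=4/h\to 0$ because $h\ge 2g(n)\to\infty$. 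In both cases $d=h-2\ell+1\to\infty$, so $d\ge 2$ for large $n$ and the condition on $p$ holds for any fixed $\epsilon\in(0,1)$.

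The main subtlety is balancing the choice of $\ell$: it must be small enough so that $2\ell=o(n-2k)$, yet, together with $k$, large enough so that $\binom{\ell+k}{k}$ dominates $n^{3/2}$. A single choice does not work across all regimes---for small $k$ one needs $\ell\gg n^{3/4}$, while for $k$ close to $n/2$ a bounded $\ell$ suffices because $\binom{k+2}{2}$ is already of order $n^2$---and the case split at $h=n^{4/5}$ reconciles these.
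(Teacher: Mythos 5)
Your proof is correct and takes essentially the same route as the paper: both arguments apply Theorem~\ref{th1} with $\ell=2$ when $k$ is close to $n/2$ (so that $\binom{k+2}{2}$ is already large) and with $\ell$ slightly above $n^{3/4}$ otherwise, checking in each case that $t\gg\sqrt n$ and that $2\ell=o(n-2k)$. Your case split on $h=n-2k$ at $n^{4/5}$, together with the explicit $\eta$-quantification, is only a cosmetic variation on the paper's subsequence argument (which delegates the large-$k$ case to Corollary~\ref{cor1}).
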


This is an intriguing corollary, since Kneser graph $KG_{n,k}$ for $k=1$ is just a complete graph $K_n$, and it is known that w.h.p. $\chi (K_n(1/2))  \sim \frac n{2\log_2 n}\ll n = \chi(K_n)$. It is clear that our proof cannot give any reasonable bound in the case when $k=1$. Indeed, to fulfill the condition on $p$ one has to make $t=\lceil (l+1)/d\rceil $ bigger than $\sqrt n$, for which, in turn, one has to choose $d\le \frac 12\sqrt n$. Therefore, the best we can obtain is the square root bound: $\chi(K_n(1/2))\ge c\sqrt n$ for some constant $c>0$.
\begin{proof} Corollary \ref{cor1} justifies the assertion of Corollary \ref{cor2} in the case $k\gg n^{3/4}$. Taking a sequence of $k(n)$, we may split it into at most two infinite sequences $N_1,N_2, N_1\cup N_2 = \N$, such that  $k(n)\gg n^{3/4}$ if we take a limit over $n\in N_1$, and $k(n)\ll n^{4/5}$ if we take a limit over $n\in N_2$. The first sequence we treat as the case of $k\gg n^{3/4}$. Therefore, w.l.o.g., we may assume that $N_2 = \N$ and, thus, $k\ll n^{4/5}.$ In this case we have $\chi(KG_{n,k})\sim n$. It is thus sufficient to verify the condition on $p$ with some $\ell$ that satisfies  $n^{3/4}\ll \ell \ll n$. As in the proof of Corollary \ref{cor1}, we obtain a similar bound on $t$: $t = \left\lceil {k+\ell\choose k}/d\right\rceil\ge \left\lceil {2+\ell\choose 2}/n\right\rceil \gg (n^{3/4})^2/n = n^{1/2}.$ The rest remains the same.
\end{proof}

The next two corollaries are a bit more general. In particular, we allow for $p$ to be a function of $n$. We omit the proof of Corollary \ref{cor4}, as it repeats the proof of Corollary \ref{cor3}.

\begin{cor}\label{cor3} Fix a natural $\ell$. \\ 1. If $1\ge p\gg \frac {n^3}{k^{2\ell}}+\frac{n(1+\ln n)}{k^{\ell}}$, then w.h.p. $\chi(SG_{n,k}(p))\ge \chi(KG_{n,k})-2\ell$.\\
2. If $p$ is fixed, $0<p\le 1$, and $k\gg n^{\frac 3{2\ell}}$, then w.h.p. $\chi(SG_{n,k}(p))\ge \chi(KG_{n,k})-2\ell$.\\
\end{cor}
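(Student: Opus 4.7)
The plan is to apply Theorem~\ref{th1} with the fixed value of $\ell$ from the corollary statement. Since $d + 1 = n - 2k - 2\ell + 2 = \chi(KG_{n,k}) - 2\ell$, the conclusion of Theorem~\ref{th1} delivers exactly the desired lower bound on $\chi(SG_{n,k}(p))$, so the task reduces to checking that the hypothesis on $p$ holds.

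The first concrete step is to bound from below the parameter $t = \lceil \binom{k+\ell}{k}/d \rceil$. Since $\ell$ is fixed, we have $\binom{k+\ell}{k} \ge k^\ell/\ell!$, while $d \le n$, which together yield $t \ge c_\ell\, k^\ell/n$ for a constant $c_\ell > 0$ depending only on $\ell$. Plugging this into $t^{-2}n\ln 3 + 2t^{-1}(1+\ln d)$ and using $\ln d \le \ln n$ gives an upper bound of the form
\[
t^{-2}n\ln 3 + 2t^{-1}(1+\ln d) \le C_\ell \left( \frac{n^3}{k^{2\ell}} + \frac{n(1+\ln n)}{k^\ell} \right)
\]
for some constant $C_\ell > 0$.

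For part 1, the hypothesis $p \gg n^3/k^{2\ell} + n(1+\ln n)/k^\ell$ says precisely that the right-hand side above is $o(p)$; hence for any fixed $\epsilon \in (0,1)$ the inequality $(1-\epsilon)p > t^{-2}n\ln 3 + 2t^{-1}(1+\ln d)$ holds for all sufficiently large $n$, and Theorem~\ref{th1} applies. For part 2, the hypothesis $k \gg n^{3/(2\ell)}$ forces $k^\ell \gg n^{3/2}$, so both $n^3/k^{2\ell} \ll 1$ and $n(1+\ln n)/k^\ell \ll (1+\ln n)/\sqrt{n} = o(1)$; the right-hand side above therefore tends to $0$, and any fixed $p > 0$ eventually exceeds it.

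There is essentially no substantive obstacle here: the argument merely specializes Theorem~\ref{th1} to a fixed $\ell$, in the spirit of the proofs of Corollaries~\ref{cor1} and~\ref{cor2}. The only minor subtlety is the requirement $d \ge 2$ from Theorem~\ref{th1}, which is automatic in the interesting regime since the claimed bound $\chi(KG_{n,k}) - 2\ell$ is non-trivial precisely when $n - 2k > 2\ell$.
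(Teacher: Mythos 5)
Your proposal is correct and follows essentially the same route as the paper: bound $t\ge c_\ell k^\ell/n$ via $\binom{k+\ell}{k}\ge k^\ell/\ell!$ and $d\le n$, deduce $t^{-2}n\ln 3+2t^{-1}(1+\ln d)=O\bigl(n^3/k^{2\ell}+n(1+\ln n)/k^\ell\bigr)$, and then verify the hypothesis of Theorem~\ref{th1} in each of the two regimes. Your added remarks on $d+1=\chi(KG_{n,k})-2\ell$ and on the $d\ge 2$ requirement are accurate but just make explicit what the paper leaves implicit.
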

\begin{proof} 
In the assumptions of the corollary we have $t = \left\lceil {l+k\choose l}/d\right\rceil\ge ck^l/n,$ where $c>0$ is a constant. Thus, we have $t^{-2}n\ln 3 + 2t^{-1}(1 +\ln d) \le O\Bigl(\frac{n^3}{k^{2l}} + \frac{n(1+\ln n)}{k^{\ell}}\Bigr) \ll p,$ so the condition on $p$ from Theorem \ref{th1} holds for sufficiently large $n$.

As for the second part, it is enough to note that if  $k\gg n^{\frac 3{2\ell}}$, then both $\frac{n^3}{k^{2l}} \ll 1$ and $\frac{n(1+\ln n)}{k^{\ell}}\ll 1$ and the condition from the first part is fulfilled.
\end{proof}

Due to the symmetric role that $k$ and $\ell$ play in the condition on $p$, Corollary \ref{cor4} is the same as Corollary \ref{cor3}, but with $k$ and $\ell$ interchanged. 

\begin{cor}\label{cor4} Fix a natural $k$. \\ 1. If $1\ge p\gg \frac {n^3}{\ell^{2k}}+\frac{n(1+\ln n)}{\ell^k}$, then w.h.p. $\chi(SG_{n,k}(p))\ge \chi(KG_{n,k})-2\ell$.\\
2. If $p$ is fixed, $0<p\le 1$, and $\ell\gg n^{\frac 3{2k}}$, then w.h.p. $\chi(SG_{n,k}(p))\ge \chi(KG_{n,k})-2\ell$.\\
\end{cor}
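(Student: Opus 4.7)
The plan is to mirror the proof of Corollary~\ref{cor3} verbatim, exploiting the fact that the right-hand side of the inequality in Theorem~\ref{th1} depends on $k$ and $\ell$ only through the symmetric quantity $\binom{\ell+k}{k}$ (and through $d = n-2k-2\ell+1$, which is likewise symmetric). Because of this symmetry, interchanging the roles of $k$ and $\ell$ in every estimate of Corollary~\ref{cor3} produces the estimates required here.

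More concretely, for part~1 I would first apply Theorem~\ref{th1} with $k$ constant and $\ell$ growing. For large enough $\ell$ one has
\[
\binom{\ell+k}{k} = \frac{(\ell+k)(\ell+k-1)\cdots(\ell+1)}{k!} \ge c\,\ell^k
\]
for some positive constant $c = c(k)$; combined with $d \le n$, this gives $t \ge c' \ell^k / n$. Plugging this into the bound from Theorem~\ref{th1} yields
\[
t^{-2}\, n \ln 3 + 2 t^{-1}(1+\ln d) = O\!\left(\frac{n^3}{\ell^{2k}} + \frac{n(1+\ln n)}{\ell^k}\right),
\]
which is $\ll p$ by assumption. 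So for large $n$ some $\epsilon > 0$ meets the hypothesis of Theorem~\ref{th1}, and the conclusion $\chi(SG_{n,k}(p)) \ge d+1 = \chi(KG_{n,k}) - 2\ell$ follows. The degenerate cases in which $d < 2$ only arise when $\chi(KG_{n,k}) - 2\ell \le 2$ and are trivial to handle separately (the bound reduces to the existence of a single edge).

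For part~2, if $p > 0$ is fixed and $\ell \gg n^{3/(2k)}$, then $\ell^{2k} \gg n^3$ and $\ell^k \gg n^{3/2} \gg n(1+\ln n)$, so both $n^3/\ell^{2k}$ and $n(1+\ln n)/\ell^k$ tend to $0$, and the hypothesis of part~1 is automatic. The main obstacle is purely notational: one has to verify that $k$ and $\ell$ appear symmetrically in the quantities of Theorem~\ref{th1}, which they do, so no additional combinatorial work is needed beyond what already appears in Corollary~\ref{cor3}.
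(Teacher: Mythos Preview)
Your proposal is correct and follows exactly the paper's intended approach: the paper explicitly omits the proof of Corollary~\ref{cor4}, stating that it ``repeats the proof of Corollary~\ref{cor3}'' with $k$ and $\ell$ interchanged, and your argument carries out precisely this interchange, using the symmetry of $\binom{k+\ell}{k}$ and $d$ in $k,\ell$ to reproduce the bound $t \ge c\,\ell^k/n$ in place of $t \ge c\,k^\ell/n$. Your brief handling of the degenerate case $d<2$ is an extra detail the paper does not spell out, but it is harmless.
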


\section{Proof of Theorem 1}
 We rely on the proof of Kneser's conjecture due to I. B\' ar\'any (it is possible to obtain the result for Kneser graphs, but not for Schrijver graphs, out of Greene's proof), as it is presented in the book \cite{Mat}.
First, for the rest of the paper we fix a certain mapping $f: [n]\to S^{d-1},$ using the following theorem due to Gale (see \cite{Mat}):
\begin{lem} There exists a mapping $f:[n]\to S^{d-1}$, such that every open hemisphere in $S^{d-1}$ contains an image of a stable $(k+\ell)$-element subset of $[n]$.
\end{lem}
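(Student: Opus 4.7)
Set $m := k+\ell$, so the hypothesis reads $n = 2m + d - 1$, matching the numerology of Gale's classical lemma on $S^{d-1}$, which guarantees at least $m$ preimages in every open hemisphere. The plan is to promote this bound to a \emph{stable} $m$-subset by an explicit moment-curve construction with alternating signs, in the spirit of Schrijver's refinement of B\'ar\'any's proof.

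Pick generic reals $t_1 < t_2 < \cdots < t_n$ and let $\gamma(t) = (1, t, t^2, \ldots, t^{d-1}) \in \R^d$ denote the moment curve. Define
$$f(i) := (-1)^i\,\gamma(t_i)/\|\gamma(t_i)\| \in S^{d-1}.$$
For any open hemisphere $H_a^+ = \{v \in S^{d-1} : \langle a, v\rangle > 0\}$, the containment $f(i) \in H_a^+$ is equivalent to $(-1)^i p(t_i) > 0$, where $p(t) := \langle a, \gamma(t)\rangle$ is a polynomial of degree at most $d-1$. Since $p$ has at most $d-1$ real roots, the indices $1, \ldots, n$ split into at most $d$ blocks of consecutive integers on which $p$ has constant sign. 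Inside any such block, the sign of $(-1)^i p(t_i)$ alternates with $i$, so the set $A := \{i : f(i) \in H_a^+\}$ contains every other index of the block; in particular, two indices of $A$ inside the same block always differ by at least $2$, and the only potential adjacencies in $A$ (in the cyclic order on $[n]$) occur at block boundaries, of which there are at most $d-1$.

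The closing step is combinatorial. One splits $A$ into its cyclic runs and applies the Gale evenness condition—implemented here through the $(-1)^i$-alternation together with the identity $n = 2m + d - 1$—to extract a cyclically stable subset of $A$ of size at least $m$. The mechanism is that each boundary violation costs at most one element, while the parity structure forces the surplus $|A|-m$ to dominate the number of violations, so these losses can be absorbed. The wraparound adjacency between $n$ and $1$ is controlled by the same parity accounting, since the block containing index $n$ and the block containing index $1$ are linked by the degree bound on $p$.

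The principal obstacle is this final bookkeeping step: one must track simultaneously the number of boundary violations, the parity choices within each block (which determine whether the block contributes $\lceil b/2\rceil$ or $\lfloor b/2\rfloor$ indices to $A$), and the cyclic adjacency of the endpoints. All other ingredients—the degree bound on $p$, the block decomposition, and the within-block stability of $A$—are immediate. This is precisely the refinement used in Schrijver's proof of his chromatic number theorem, and can be carried out following the presentation in \cite{Mat}.
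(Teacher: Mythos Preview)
The paper does not actually prove this lemma: it simply quotes it as a known result due to Gale (in the stable-set strengthening used by Schrijver) and refers the reader to Matou\v{s}ek's book \cite{Mat}, adding only the remark that the construction uses the moment curve. Your proposal goes further than the paper by sketching that very construction---the alternating moment curve $f(i)=(-1)^i\gamma(t_i)/\|\gamma(t_i)\|$---which is precisely the argument presented in \cite{Mat}.

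The parts you carry out explicitly are correct: the degree bound $\deg p\le d-1$, the decomposition of $[n]$ into at most $d$ sign blocks, and the observation that within a block the indices in $A$ are non-adjacent. You then defer the final counting step (extracting a cyclically stable $m$-subset from $A$ using $n=2m+d-1$) to \cite{Mat}, candidly flagging it as the main bookkeeping obstacle. That is an honest description of where the work lies, and since the paper itself gives no proof at all and points to the same reference, your sketch is strictly more informative than the paper's treatment while following the same source.
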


This is the point where we use the definition of $d$: $d = n-2k-2\ell+1$. We note that such a mapping can be relatively easily constructed using the moment curve. In particular, this mapping is such that no $d$ points lie on a great (hyper)sphere, that is, a sphere that is an intersection of $S^d$ and a hyperplane that passes through the center of $S^d$. In what follows we omit the prefix ``hyper''. Consider all combinatorially distinct partitions of the set $f([n])$ by great spheres. \\

The first part of the proof is dedicated to an (a bit technical) estimation of the probability for a random graph $SG_{n,k}(p),$ which we for shorthand denote by $G$, and a given function $f$, to have a certain partition.

Combinatorially, each partition by a great sphere is determined by the subsets $S^+, S^-$ of points from $[n],$ such that $f(S^+), f(S^-)$ lie into different open hemispheres (with respect to a given great sphere).
Fix a great sphere.
Consider the sets $\mathcal S^+_k, \mathcal S^-_k$ of all stable $k$-element subsets of $S^+,S^-$ respectively. Note that $|\mathcal S^+_k|, |\mathcal S^-_k|\ge {k+\ell\choose k}$, since $S^+, S^-$ contain a $(k+\ell)$-element stable subset, any of which $k$-subsets is stable as well. Put $t(S^+) = \lceil|\mathcal S^+_k|/d\rceil, t(S^-) = \lceil|\mathcal S^+_k|/d\rceil.$ We choose a $t(S^+)$-element subset $M^+$ and a $t(S^-)$-element subset $M^-$ out of $\mathcal S^+_k,\mathcal S^-_k$ respectively. There are ${|\mathcal S^+_k|\choose t(S^+)}{|\mathcal S^-_k|\choose t(S^-)}$ choices for a pair of $M^+,M^-$, which is at most ${t(S^+)d\choose t(S^+)}{t(S^-)d\choose t(S^-)}$.

Below is the description of an event $A$, that is crucial for the proof. The event $A$ holds if there exists a partition of the set $f([n])$ by a great sphere and subsets $M^+, M^-$ of the type described above, so that there is no edge between them in $G$. We can bound the probability of $A$ as follows:

\begin{equation}\label{eq1}
P(A)\le \sum_{\text{possible partitions into } S^+,\ S^-}{t(S^+)d\choose t(S^+)}{t(S^-)d\choose t^(S^-)}(1-p)^{t(S^+)t(S^-)}\end{equation}

Let us show that for $t_1=t_2=\lceil {k+l\choose k}/d\rceil$  the value of the function $$g(t_1,t_2)={t_1d\choose t_1}{t_2d\choose t_2}(1-p)^{t_1t_2}$$ is maximal among all $t_1,t_2\ge \lceil {k+\ell\choose k}/d\rceil$. Since $t_1,t_2$ are integer and the expression is symmetric with respect to $t_1,t_2$, it is enough to compare the values of the expression for $t_1,t_2$ and $t_1+1, t_2$:
\begin{align*}\frac {g(t_1+1,t_2)}{g(t_1,t_2)}=&\ \frac{((t_1+1)d)!}{(t_1+1)!((t_1+1)(d-1))!} \frac{t_1!(t_1(d-1))!}{(t_1d)!}(1-p)^{t_2} = \\=&\ \frac{(t_1+1)d\prod_{i=1}^{t_1}(((t_1+1)(d-1)+i)}{(t_1+1)\prod_{i=1}^{t_1}((t_1(d-1)+i)}(1-p)^{t_2}<\\<& \ d\left(\frac{t_1+1}{t_1}\right)^{t_1}(1-p)^{t_2}<  e^{1+\ln d-pt_2}.\end{align*}
The last expression is less than 1 if $p>\frac {(1+\ln d)}{\lceil {k+\ell\choose k}/d\rceil}$, which is a corollary of the condition on $p$ given in the theorem. Therefore, we obtain the following bound by changing in (\ref{eq1}) all the summands $g(t_1,t_2)$ to $g(t,t)$ (in the following calculation we use the notation $t = \lceil {k+\ell\choose k}/d\rceil$, which we used in the formulation of Theorem \ref{th1}):

\begin{align*}
P(A)<&\  3^n{td\choose t}^2(1-p)^{t^2}< 3^n\exp\left\{2t(1+\ln d) -pt^2\right\} =\\
=&\ \exp\left\{t^2\bigl(t^{-2}n\ln 3 + 2t^{-1}(1+\ln d) -p\bigr)\right\}<\\<&\ \exp\left\{-t^2\epsilon p\right\}<\exp\{-\epsilon n\ln 3\}\ll 1.
\end{align*}

The first inequality follows from the fact that the number of possible partitions is bounded from above by $3^n$.
In the second inequality we use the bound ${td\choose t}<\left(\frac{etd}{t}\right)^t$. In the last two inequalities we used the condition on $p$ from the statement of Theorem \ref{th1}.\\

 The second part of the proof interconnects the event $A$ and the property of the graph $G$ to have chromatic number at most $d$.

 To complete the proof of the theorem, we  show that in any (not necessary proper) coloring of $SG_{n,k}$ into $d$ colors we always can find two such sets of vertices $M^+, M^-$, which additionally are monochromatic. Since $P(A)\ll 1$, w.h.p. any two such sets in $G$ have at least one edge between them, and it is thus impossible to find a proper $d$-coloring of $G$.

Consider an arbitrary coloring of $SG_{n,k}$ into $d$ colors. Each coloring together with the mapping $f$ described in the beginning of the proof induce a partition of the sphere $S^{d-1}$ into sets $B_1,\ldots, B_d$. We describe the partition below.
 A point $x$ of $S^{d-1}$ is included into $B_i$, where $1\le i\le d$, if in the open hemisphere with the center in $x$  the color $i$  is the most popular color among all the stable $k$-sets fully contained in that hemisphere. If there are equally popular colors, we put the point in all such $B_i$'s. It means that there are at least $t$ stable $k$-sets colored in $i,$ lying in the open hemisphere with the center in $x$.

    We apply the celebrated theorem due to Lusternik, Schnirelman and Borsuk (see the book \cite{Mat}), one of which formulations is as follows:
\begin{thm}\label{bor}
     Whenever the sphere $S^{d-1}$ is covered by open sets $S_1,\ldots, S_{d}$, there exists $i$ such that $S_i\cap (-S_i)\neq \emptyset.$
\end{thm}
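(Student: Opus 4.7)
The plan is to derive this formulation of the Lusternik, Schnirelman and Borsuk theorem from the Borsuk--Ulam theorem, which states that any continuous odd map $S^{d-1}\to\R^{d-1}$ must have a zero; I would take Borsuk--Ulam as a black box, since it contains all the actual topological input.

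First I would convert the combinatorial data of the open cover into continuous functions on the sphere. For each $i=1,\ldots,d$, set $f_i(x)=\mathrm{dist}(x,\,S^{d-1}\setminus S_i)$. Each $f_i$ is continuous and nonnegative, vanishes precisely off $S_i$, and, because the $S_i$ cover $S^{d-1}$, for every $x$ at least one of the values $f_i(x)$ is strictly positive.

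Next I would assemble an odd continuous map
\[
F:S^{d-1}\to\R^{d-1},\qquad F(x)=\bigl(f_1(x)-f_1(-x),\,\ldots,\,f_{d-1}(x)-f_{d-1}(-x)\bigr),
\]
and apply Borsuk--Ulam to obtain $x_0$ with $F(x_0)=0$, i.e.\ $f_i(x_0)=f_i(-x_0)$ for every $i<d$. If some such $f_i(x_0)$ is positive then both $x_0$ and $-x_0$ lie in $S_i$, and we are done. Otherwise $x_0$ and $-x_0$ both miss $S_1,\ldots,S_{d-1}$, and the covering property forces each of them into $S_d$, again producing an antipodal pair inside a single $S_i$.

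The conceptual obstacle is of course hiding Borsuk--Ulam behind the word ``black box''; granted that, the reduction is essentially a matter of choosing the right odd map. The small subtlety is that one must build $F$ out of only $d-1$ of the differences $f_i(x)-f_i(-x)$ so as to land in $\R^{d-1}$, using the covering property to account for the leftover index $d$ in the dichotomy above; using all $d$ coordinates would make $F$ odd into $\R^d$, where Borsuk--Ulam gives nothing.
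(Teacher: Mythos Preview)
Your reduction to Borsuk--Ulam is correct and is precisely the standard argument (as found, for instance, in Matou\v{s}ek's book). Note, however, that the paper does not actually prove this theorem: it is quoted as a known result and attributed to Lusternik, Schnirelman and Borsuk, with a reference to \cite{Mat} for details. So there is no ``paper's own proof'' to compare against; your proposal simply fills in what the paper treats as a black box, using the same black box (Borsuk--Ulam) one level deeper.
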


We claim that the sets $B_1,\ldots, B_d$ are open. Indeed, let us check that $B_1$ is open. For any given point $y \in f([n])$ the set $U_y$ of points  $x\in S^d,$ such that the open hemisphere with the center in  $x$ contains $y,$ is open. Next, for any subset $S\subset f([n])$ the set $U_S=\cap_{y\in S}U_y$ is also open. Finally, let $\mathcal S_1$ be the set of all subsets $S$ with the property that in the coloring of $k$-sets of $S$ the color 1 is the most popular. Then $B_1 = \cup_{S\in \mathcal S_1} U_S,$ and, consequently, $B_1$ is open.

Thus, we can apply Theorem \ref{bor} and obtain that one of $B_i$ contains two antipodal points $x, -x$. Then, by the definition of the set $B_i$, there are at least $t(S^+),t(S^-)$ vertices of color $i$ in the sets $\mathcal S^+_k,$ $\mathcal S^-_k$ respectively. Here the sets $\mathcal S^+_k,$ $\mathcal S^-_k$ correspond to the partition of the sphere by the great sphere, orthogonal to $\bar x.$ We put $M^+, M^-$  equal to these two monochromatic sets, and, by the first part of the proof, w.h.p. they have an edge between them, so the proof is complete.

\section{Discussion} The results on the chromatic number obtained in the paper are quite sharp, but it would be very interesting to make them even sharper. In particular, we formulate the following problem:

\begin{prob} Is it true that for some $k = k(n)$ w.h.p. we have $\chi(KG_{n,k}(1/2))= \chi(KG_{n,k})$?
\end{prob}

I expect that the answer should be positive for a wide range of $k$. The same question may be asked for Schrijver graphs, in which case it is not so complicated to see that the answer is negative. It almost surely drops by 1 for a wide range of $k$. However, a question if it drops by more than 1 is open. To approach these problems one may need to modify the original Lov\'asz' technique for this probabilistic setting. It is as well very interesting to figure out, how the chromatic number of random Kneser hypergraphs behaves.

\section{Acknowledgements} I am grateful to the anonymous referees for their valuable comments that helped to improve the presentation and the content of the paper. In particular, they encouraged me to think about this problem for Schrijver graphs, which led to the generalization of the result.

\end{document}